\documentclass[12pt]{amsart}

\usepackage[a4paper,margin=2.5cm]{geometry}

\usepackage[T1]{fontenc}
\usepackage[utf8]{inputenc}
\usepackage[english]{babel}

\usepackage{amsmath,amssymb,amsthm,amsfonts}
\usepackage{mathrsfs}

\usepackage{enumitem}
\usepackage{hyperref}

\newtheorem{theorem}{Theorem}
\newtheorem{lemma}{Lemma}
\newtheorem{proposition}{Proposition}

\newtheorem{remark}{Remark}

\DeclareMathOperator{\Der}{Der}
\DeclareMathOperator{\End}{End}

\DeclareMathOperator{\codim}{codim}
\DeclareMathOperator{\Ker}{Ker}

\DeclareMathOperator{\ad}{ad}

\begin{document}
	
	\title{Lie Algebras of Locally Finite Derivations over Fields of Positive Characteristic}
		\author{Oksana Bezushchak, Anatoliy Petravchuk}
		\address{Oksana Bezushchak: Faculty of Mechanics and Mathematics,
			Taras Shevchenko National University of Kyiv, 60, Volodymyrska street, 01033  Kyiv, Ukraine}
		\email{bezushchak@knu.ua}
		\address{Anatoliy Petravchuk: Faculty of Mechanics and Mathematics,
			Taras Shevchenko National University of Kyiv, 60, Volodymyrska street, 01033  Kyiv, Ukraine}
		\email{petravchuk@knu.ua}
	\date{}
	
	\keywords{Lie algebra of derivations, locally finite derivation, locally nilpotent derivation}
	\subjclass[2020]{Primary 16W25, 17B66; Secondary 13N15, 16W20, 93C10}
	
	\maketitle
	
	\begin{abstract}
		Let \(\mathbb{F}\) be a field of positive characteristic, and let \(A\) be a finitely
	generated commutative associative \(\mathbb{F}\)-algebra. We study Lie subalgebras of
	\(\Der A\) generated by locally finite derivations.  We prove that if all commutators of the generators satisfy a natural local finiteness condition, then the associative subalgebra of $\End_{\mathbb{F}} A$ generated by these derivations is finite-dimensional.
	\end{abstract}
	
	\section{Introduction}
	
	Let \(\mathbb{F}\) be a field, and let \(A\) be a commutative
	associative \(\mathbb{F}\)-algebra generated by elements
	\(a_1,\ldots,a_m\).
	Denote by \(\Der A\) the Lie algebra of all
	\(\mathbb{F}\)-derivations of \(A\).

	Recall that a derivation	$d\in\Der A$	is called \textbf{locally nilpotent} if for every element \(a\in A\) there
	exists an integer \(n(a)\ge 1\) such that	$d^{\,n(a)}a=0.$ 	A Lie subalgebra	$L\subseteq \Der A$	is said to \textbf{act locally nilpotently} on \(A\) if every element of \(L\) is a
	locally nilpotent derivation.

	In \cite{BezushchakPetravchukZelmanov}, it was proved that every finitely generated Lie subalgebra \(L\subseteq \Der A\) acting locally nilpotently on \(A\) is finite-dimensional. M.~A.~Belabbas \cite{Belabbas}  extended this result to locally finite actions over fields of characteristic zero.
	
	The aim of this note is to establish an analogue	of this result over fields of positive characteristic.
	
A derivation \(d\in\Der A\) is said to
\textbf{act finitely} on an element \(a\in A\) if
\[
\dim_{\mathbb{F}}
\left(\sum_{k=0}^{\infty}\mathbb{F}d^k a\right)
<\infty.
\]
A derivation \(d\) is called \textbf{locally finite}
if it acts finitely on every element of \(A\).
A Lie subalgebra \(L\subseteq\Der A\) is said to
\textbf{act locally finitely} on \(A\) if every
element of \(L\) is a locally finite derivation.

	Let	$d_1,\ldots,d_n\in \Der A$,	and let \(L\) be the Lie subalgebra of \(\Der A\) generated by
	\(d_1,\ldots,d_n\). Denote by \(U\) the associative subalgebra of	$\End_{\mathbb{F}} A$	generated by these derivations.
	
	By a \textbf{commutator} in \(d_1,\ldots,d_n\),
	we mean either one of the generators
	\(d_1,\ldots,d_n\) or any element obtained from
	them by iterated Lie brackets.
	
	Our main result is the following.

	\begin{theorem}\label{thm:main} Let \(A\) be a commutative associative algebra
		over a field of positive characteristic,
		generated by elements \(a_1,\ldots,a_m\). 	Let	$d_1,\ldots,d_n\in \Der A$.	If every commutator \(\rho\) in
		\(d_1,\ldots,d_n\) acts finitely on each 		generator \(a_i\),  \(i=1,\ldots,m\), then the associative  subalgebra $U$ of \(\End_{\mathbb F}A\)  generated by $d_1,\ldots,d_n$ is finite-dimensional.	\end{theorem}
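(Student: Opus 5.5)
The plan rests on two features of characteristic $p$: a $p$-th power of a derivation is again a derivation, and every derivation kills $p$-th powers. Put $B=\mathbb{F}[a_1^p,\ldots,a_m^p]\subseteq A$. Every $d\in\Der A$ satisfies $d(a^p)=pa^{p-1}d(a)=0$, and $d$ is $A$-semilinear by Leibniz, so $d$ is $B$-linear. Hence every element of $U$ is $B$-linear. Moreover $A$ is generated as a $B$-module by the finitely many monomials $a_1^{e_1}\cdots a_m^{e_m}$ with $0\le e_j<p$. Consequently:
\begin{quote}
if $W\subseteq A$ is a finite-dimensional $U$-invariant subspace containing $a_1,\ldots,a_m$, then $U$ is finite-dimensional.
\end{quote}
To see this, note that every $u\in U$ is a sum of words $d_{i_1}\cdots d_{i_k}$. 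Applied to a product of $N$ elements of $W$, the Leibniz rule expands $u$ into a sum of products of elements of $W$, so the span $A_N$ of all products of at most $N$ elements of $W$ is $U$-invariant and finite-dimensional. Take $N=m(p-1)$, so that $A_N$ contains all the $B$-module generators above. A $B$-linear map is determined by its values on these generators, so $U$ embeds into $\End_{\mathbb{F}}(A_N)$.

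The first real step is to show that every commutator $\rho$ is an algebraic element of $\End_{\mathbb{F}}A$. Fix $\rho$ and set $V_\rho=\sum_i\sum_k \mathbb{F}\rho^k a_i$, which is finite-dimensional by hypothesis and $\rho$-invariant. Any linear operator on a finite-dimensional space is annihilated by a $p$-polynomial $f(t)=\sum_j c_jt^{p^j}$, since some $p$-polynomial is divisible by its minimal polynomial (the $t^{p^j}$ reduced modulo the minimal polynomial are linearly dependent). Each $\rho^{p^j}$ is a derivation, so $f(\rho)=\sum_j c_j\rho^{p^j}$ is a derivation of $A$. It vanishes on every generator $a_i$, hence $f(\rho)=0$ on all of $A$. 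Thus each commutator satisfies an additive polynomial identity in $\End_{\mathbb{F}}A$.

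The second step, which I expect to be the main obstacle, is to produce the invariant subspace $W$. The natural strategy is to show that the Lie algebra $L$, or its restricted closure $L_p$ spanned by all $\rho^{p^j}$, is finite-dimensional. Then $W=\sum_i U a_i$ is spanned by the vectors $x_1\cdots x_k a_i$, where $x_1,\ldots,x_k$ run over a basis of $L_p$. Using the relations $f(x)=0$ and a PBW-type ordering in the restricted enveloping algebra, only boundedly many exponents are needed, so $W$ is finite-dimensional.

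To get finite-dimensionality of $L_p$, I would exploit the structure $L_p\subseteq \Der_B A\subseteq \End_B A$. The latter is a finitely generated module over the Noetherian ring $B$, so $BL_p$ is a finitely generated $B$-module and a Lie algebra. I would then argue, in the spirit of \cite{BezushchakPetravchukZelmanov} and \cite{Belabbas}, that a finitely generated Lie algebra whose commutators all satisfy $p$-polynomial identities spans only a finite-dimensional $\mathbb{F}$-space inside such a module. The approach is to decompose $A$, or its localization at the generic points of $B$, into generalized eigenspaces of the commuting semisimple parts. Then one shows that the nilpotent parts generate a locally nilpotent action, where the BPZ-type theorem applies.

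If that route stalls, the fallback is to view $\End_B A$ as a PI-algebra (it is a finite module over a commutative ring) and apply Shirshov's height theorem to $U$. This requires upgrading from algebraicity of commutators to algebraicity of all words of bounded length, which I would attempt via the restricted structure $\rho^{p^j}\in L_p$.
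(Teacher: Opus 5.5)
Your reduction to a finite-dimensional $U$-invariant subspace $W$ is correct, and your first step is essentially the paper's Lemma~\ref{lem:polynomial}. The gap is the decisive step: you never prove that $L_p$ (or $\sum_i Ua_i$) is finite-dimensional. That claim is equivalent to the theorem itself, since if $L_p$ were finite-dimensional your PBW argument would bound $U$ directly, making $W$ superfluous.

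Neither route you suggest closes this gap.
\begin{itemize}
\item The eigenspace route presupposes a commuting family of semisimple parts, but semisimple parts of different commutators need not commute. For $p\neq 2$, the derivations $x\partial_x$ and $x\partial_y+y\partial_x$ of $\mathbb{F}[x,y]$ are locally finite and semisimple, yet $[x\partial_x,\,x\partial_y+y\partial_x]=x\partial_y-y\partial_x\neq 0$.
\item Nilpotent parts need not generate a locally nilpotent Lie algebra. Both $x\partial_y$ and $y\partial_x$ are locally nilpotent, but their bracket $x\partial_x-y\partial_y$ is not. So there is nothing to which the theorem of \cite{BezushchakPetravchukZelmanov} could be applied.
\item The Shirshov route needs algebraicity of \emph{all} words of length up to the degree of an identity of $\End_B A$. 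The hypothesis controls only Lie commutators, and a product such as $d_1d_2$ is not in $L_p$. ``Upgrading via the restricted structure'' is not an argument: passing from commutators to associative words is exactly the Kurosh-type difficulty that requires a deep input.
\end{itemize}

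The missing input is Zelmanov's theorem (Proposition~\ref{prop:zelmanov}), combined with a homogenization trick.
\begin{itemize}
\item In $\mathbb{F}\langle x_1,\ldots,x_n\rangle$, impose the \emph{homogeneous} relations $\rho^{p^{n(\rho)}}=0$, which are the leading terms of the monic $p$-polynomials $f_\rho$. Impose also $P(\sigma_1,\ldots,\sigma_l)=0$ for a multilinear Lie identity $P$ of $\Der A$.
\item By Zelmanov's theorem the quotient is nilpotent, so every word of length $N$ lies in the ideal.
\item In $U$ the $P$-terms vanish. Each $\rho^{p^{n(\rho)}}$ is rewritten via $f_\rho(\rho)=0$ as a combination of the $\rho^{p^t}$ with $t<n(\rho)$, which strictly shortens words. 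Hence $U$ is spanned by words of length $<N$.
\end{itemize}

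Your $B$-linearity observation could supply the identity $P$. Indeed $\Der A\subseteq\End_B A$, which is a homomorphic image of a subalgebra of $M_r(B)$ with $r=p^m$. It therefore satisfies the multilinear Lie polynomial $\sum_{\sigma\in S_{r^2+1}}\mathrm{sgn}(\sigma)\,[y,x_{\sigma(1)},\ldots,x_{\sigma(r^2+1)}]$ (left-normed), giving an alternative to citing \cite[Proposition~4.1]{BezushchakPetravchukZelmanov}. Without Zelmanov's theorem (or an equally strong substitute) and the leading-term reduction, however, the proposal does not prove the theorem.
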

	
	\section{Auxiliary results}
	
	We first record a simple consequence of local finiteness in positive
	characteristic.
	
	\begin{lemma}\label{lem:polynomial}
		Let \(\rho\) be a commutator in \(d_1,\ldots,d_n\). Put
		\[
		S_i=
		\dim_\mathbb{F}\sum_{k=0}^{\infty}\mathbb{F}\rho^k a_i,
		\qquad
		1\le i\le m, \qquad 		\text{and} \qquad	n(\rho)=S_1+\cdots+S_m.
		\]
		Then there exists a nonzero polynomial of the form
		\[
		f_\rho(t)
		=
		\alpha_0t+\alpha_1t^p+\alpha_2t^{p^2}
		+\cdots+\alpha_{n(\rho)}t^{p^{n(\rho)}} \qquad	\text{such that} \qquad 	f_\rho(\rho)=0.	\]	Moreover, one may assume that \(f_\rho(t)\) is monic. 
	\end{lemma}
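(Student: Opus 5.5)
Let \(V=\sum_{i=1}^m\sum_{k\ge0}\mathbb{F}\rho^k a_i\). By hypothesis \(\dim_{\mathbb F}V\le n(\rho)\), and \(\rho V\subseteq V\). The first step is to produce a nonzero polynomial of the required \(p\)-polynomial shape that annihilates \(\rho|_V\). Consider the \(n(\rho)+1\) operators \(\rho|_V,\rho^p|_V,\rho^{p^2}|_V,\dots,\rho^{p^{n(\rho)}}|_V\). They all lie in the commutative algebra \(\mathbb F[\rho|_V]\subseteq\End_{\mathbb F}V\), whose dimension is at most \(\dim V\le n(\rho)\), since the minimal polynomial of \(\rho|_V\) has degree \(\le\dim V\). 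So these \(n(\rho)+1\) operators are linearly dependent, and some nontrivial combination gives \(f_\rho(\rho)|_V=0\) with \(f_\rho(t)=\sum_j\alpha_jt^{p^j}\) nonzero.

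Equivalently, and this is cleaner for monicity, we can take the minimal polynomial \(\mu\) of \(\rho|_V\), of degree \(s\le n(\rho)\). Among the residues of \(t,t^p,\dots,t^{p^{s}}\) modulo \(\mu\), which live in an \(s\)-dimensional space, there is a dependence. Choose the smallest \(r\) such that \(t^{p^r}\) is a combination of \(t,\dots,t^{p^{r-1}}\) mod \(\mu\). This gives a monic additive polynomial \(f_\rho\) of degree \(p^r\) with \(r\le s\le n(\rho)\) and \(\mu\mid f_\rho\). Padding with zero coefficients keeps the stated form up to \(p^{n(\rho)}\), except that "monic" then refers to the actual leading term. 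This handles the "moreover" clause.

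The second step, which I expect to be the real point, is to pass from \(f_\rho(\rho)=0\) on \(V\) to \(f_\rho(\rho)=0\) on all of \(A\). The key fact is that in characteristic \(p\), \(\rho^{p^j}\) is again a derivation, by Leibniz: \(\rho^{p}(ab)=\sum\binom{p}{k}\rho^k a\,\rho^{p-k}b=\rho^p(a)b+a\rho^p(b)\), and this iterates. Hence \(D=f_\rho(\rho)=\sum\alpha_j\rho^{p^j}\) is a linear combination of derivations, so \(D\in\Der A\). Since \(D\) vanishes on the generators \(a_1,\dots,a_m\) (they lie in \(V\)), \(D\) kills every monomial in the \(a_i\) by the Leibniz rule, and also kills \(1\). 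Therefore \(D=0\) on \(A\), that is, \(f_\rho(\rho)=0\) in \(\End_{\mathbb F}A\).

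The only subtle point is exactly this: the restriction to \(p\)-power exponents, with no constant term, is what makes \(f_\rho(\rho)\) a derivation. That is why I would work with additive polynomials throughout rather than with the minimal polynomial itself.
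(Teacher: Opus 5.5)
Your proof is correct and is essentially the paper's argument. You use the same dimension count; you phrase it as $\dim\mathbb{F}[\rho|_V]\le\dim V\le n(\rho)$, while the paper uses the kernels of $f\mapsto f(\rho)a_i$ on the $(n(\rho)+1)$-dimensional space of additive polynomials. That count gives a nonzero additive $f_\rho$ killing $\rho$ on the generators. Then $f_\rho(\rho)$ is a derivation, hence zero. The paper uses the derivation property without comment, and you rightly justify it via $p\mid\binom{p}{k}$ for $0<k<p$.

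The one point where the paper goes further is the normalization $\alpha_{n(\rho)}=1$, which is what the main theorem actually uses. You leave this open, and the fix is one line. Replace your monic $f_\rho$ of degree $p^r$ by $f_\rho^{\,p^{n(\rho)-r}}$. This is still additive because Frobenius is additive in characteristic $p$, and it is monic of degree $p^{n(\rho)}$. Its value at $\rho$ is $f_\rho(\rho)^{p^{n(\rho)-r}}=0$, so it still annihilates $\rho$.
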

	
	\begin{proof}
		Since	$f_\rho(\rho)\in\Der A,$ 	it is enough to find \(f_\rho(t)\) such that	$f_\rho(\rho)a_i=0,$	$	1\le i\le m.$	Consider the vector space
		\[
		V=\sum_{k=0}^{n(\rho)}\mathbb{F}\, t^{p^k}.
		\]
		Then	$\dim_\mathbb{F} V=n(\rho)+1.$	For \(1\le i\le m\), define a linear mapping	$\varphi_i\colon V\to A,$ $\varphi_i(f(t))=f(\rho)a_i.$	By the definition of \(S_i\), we have	$\codim_V \Ker\varphi_i\le S_i.$	Consequently,
		\[
		\codim_V\bigcap_{i=1}^{m}\Ker\varphi_i
		\le
		S_1+\cdots+S_m
		=
		n(\rho).
		\]
		Since \(\dim_\mathbb{F} V=n(\rho)+1\), it follows that
		\[
		\bigcap_{i=1}^{m}\Ker\varphi_i\ne 0.
		\]
		Choose a nonzero polynomial
		\[
		f_\rho(t)\in \bigcap_{i=1}^{m}\Ker\varphi_i.
		\]
	
	Then \(f_\rho(\rho)a_i=0\) for \(1\le i\le m\). Hence,
	\(f_\rho(\rho)\) vanishes on the generators of \(A\).
	Since \(f_\rho(\rho)\) is a derivation, we obtain
	\(f_\rho(\rho)=0\).
	Multiplying by a nonzero scalar if necessary, we may assume
	that \(f_\rho(t)\) is monic.
	
	If \(\deg f_\rho(t)=p^j\) for some \(j<n(\rho)\), then
	the polynomial
	\[
	g_\rho(t)=f_\rho(t)^{p^{n(\rho)-j}}
	\]
	is monic of degree \(p^{n(\rho)}\). Moreover,
	\[
	g_\rho(\rho)
	=f_\rho(\rho)^{p^{n(\rho)-j}}=0.
	\]
	Thus, we may assume that \(f_\rho(t)\) is monic
	of degree \(p^{n(\rho)}\).
	\end{proof}
	
	We shall also use the following result.
	
	\begin{proposition}[{\cite{Zelmanov}, Theorem 1.1}]\label{prop:zelmanov}
	
	Let \(R\) be an associative \(\mathbb{F}\)-algebra generated by
	elements \(u_1,\ldots,u_n\), and let \(L\subseteq R\) be the Lie
	subalgebra generated by these elements. Suppose that
	\begin{enumerate}
		\item[(1)] \(L\) satisfies a polynomial identity;
		
		\item[(2)] for every commutator \(\rho\) in \(u_1,\ldots,u_n\),
		there exists an integer \(N_\rho\geq 1\) such that
		\[
		\rho^{N_\rho}=0.
		\]
	\end{enumerate}
		Then the algebra \(R\) is nilpotent.
	\end{proposition}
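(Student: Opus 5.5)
The plan is to split the argument into a Lie-theoretic step and an associative step. First I would show that \(L\) is nilpotent, hence finite-dimensional. Then I would show that an associative algebra generated by a finite-dimensional nilpotent Lie algebra of nilpotent elements is nilpotent.

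\emph{Step 1: ad-nilpotency.} For a commutator \(\rho\) with \(\rho^{N_\rho}=0\), write \(\ad\rho = \lambda_\rho - \mu_\rho\), where \(\lambda_\rho\) and \(\mu_\rho\) are left and right multiplication by \(\rho\) on \(R\). These two operators commute, so \((\ad\rho)^{2N_\rho-1}=0\) on \(R\), and hence on \(L\). Consequently \(L\) is a finitely generated Lie algebra satisfying a polynomial identity, and every commutator in its generators is ad-nilpotent.

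\emph{Step 2: nilpotency of \(L\).} This is the main obstacle. I would not try to reprove it from scratch. Instead I would invoke Zelmanov's theorem, from the Kostrikin--Zelmanov circle of results on Engel-type conditions: a finitely generated PI Lie algebra in which every commutator in the generators is ad-nilpotent is nilpotent. Its proof relies on the theory of sandwiches and Jordan-algebra methods and is far beyond the scope of this note. This is essentially the content of \cite{Zelmanov}. Since \(L\) is finitely generated and nilpotent, it is finite-dimensional. It is spanned by finitely many commutators \(\rho_1,\dots,\rho_s\) in \(u_1,\ldots,u_n\), each of which is nilpotent in \(R\) by hypothesis~(2).

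\emph{Step 3: nilpotency of \(R\).} I would argue by induction on \(\dim L\) that the associative algebra generated by a finite-dimensional nilpotent Lie algebra \(L\) of nilpotent elements is nilpotent. Since \(L\) is nilpotent, \([L,L]\neq L\), so there is an ideal \(L_0\) of codimension \(1\) containing \([L,L]\). I would choose \(x\) among the spanning commutators with \(L=L_0+\mathbb{F}x\). The elements of \(L_0\) are still nilpotent in \(R\), since \(L_0\) is spanned by commutators. By induction, the associative subalgebra \(R_0\) generated by \(L_0\) satisfies \(R_0^M=0\), and \(x^N=0\) by hypothesis~(2).

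Because \([x,L_0]\subseteq L_0\), we get \([x,R_0]\subseteq R_0\), that is, \(xR_0\subseteq R_0x+R_0\). Hence every product of generators can be rewritten as a sum of words of the form \(r_1x^{i_1}r_2x^{i_2}\cdots\) with \(r_j\in R_0\). I would then push powers of \(x\) to the right using \(xr=rx+[x,r]\). This shows that any product of length at least roughly \(MN\) lies in the span of words containing either \(M\) factors from \(R_0\) or \(x^N\). More precisely, I would argue that \(R=\sum_{i<N}(R_0+\mathbb{F})\cdots\) with the \(R_0\)-degree not decreasing under the rewriting, so long words vanish. Therefore \(R^{K}=0\) for some \(K\) depending on \(M\) and \(N\), which completes the induction and the proof.
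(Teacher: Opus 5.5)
Your Steps 1 and 2 match the Remark following the proposition: $\rho^{N}=0$ gives $\ad(\rho)^{2N-1}=0$, and Zelmanov's theorem is used as a black box. You state its conclusion as nilpotency of $L$, which is what your Step 3 needs. The routes differ only in getting from $L$ back to $R$. The paper just cites PBW: with a basis of nilpotent commutators, $R$ is spanned by ordered monomials with bounded exponents, and nilpotency then needs a weight count along the lower central series. You instead run an induction on $\dim L$. That is a legitimate and more hands-on alternative, but Step 3 as written has a real flaw.

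You take an \emph{arbitrary} codimension-one ideal $L_0\supseteq[L,L]$ and then assert that $L_0$ is spanned by commutators and that its elements are nilpotent. Neither follows. For instance, let $n=2$ and let $\bar u_1,\bar u_2$ be independent in $L/[L,L]$. Then $L_0=[L,L]+\mathbb{F}(u_1-u_2)$ is such an ideal, yet every commutator lying in $L_0$ lies in $[L,L]$. Nothing known a priori makes $u_1-u_2$ nilpotent.

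Nor can you fix this by weakening the inductive hypothesis to ``$L$ is spanned by nilpotent elements''. In $M_2(\mathbb{F})$ with $\operatorname{char}\mathbb{F}=2$, the elements $E_{12}$, $E_{21}$, $E_{12}+E_{21}+1$ are nilpotent. They span a Heisenberg (hence nilpotent) Lie algebra, since $[E_{12},E_{21}]=1$, yet they generate all of $M_2(\mathbb{F})$.

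The invariant to carry is a spanning set $W$ of nilpotent elements with $[W,W]\subseteq W$; at the top level, take all commutators in $u_1,\dots,u_n$. Then $L_0$ must be chosen compatibly:
\begin{itemize}
\item take $w_1,\dots,w_k\in W$ whose images form a basis of $L/[L,L]$;
\item set $L_0=[L,L]+\sum_{j<k}\mathbb{F}w_j$ and $x=w_k$.
\end{itemize}
Since $[L,L]$ is spanned by $[W,W]\subseteq W$, the set $W_0=(W\cap[L,L])\cup\{w_1,\dots,w_{k-1}\}$ spans $L_0$ and satisfies $[W_0,W_0]\subseteq W_0$. So the induction closes.

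The final count also needs tightening. When you push $x$ to the right, the correction terms $[x,r]$ absorb copies of $x$, so the rewritten terms need not contain $x^N$. There are two clean ways to finish:
\begin{itemize}
\item By pigeonhole, a word of length $\ge MN$ with fewer than $M$ letters from $L_0$ already contains $x^N$ as a subword, so it vanishes. Only words with at least $M$ such letters need rewriting. Rewriting preserves the number of $L_0$-letters, so these land in $R_0^M+R_0^MR=0$.
\item Alternatively, $J=R_0+R_0R$ is an ideal, because $xR_0\subseteq R_0x+R_0$. Then $J^M=0$ and $R^N\subseteq J$, whence $R^{MN}=0$.
\end{itemize}
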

	
	\begin{remark} 	
		In fact, Theorem 1.1 of \cite{Zelmanov} asserts that if
		$L$ satisfies a polynomial identity and the adjoint
		operator $\ad(\rho)$ is nilpotent for every commutator
		$\rho$ in $u_1,\ldots,u_n$, then the Lie algebra $L$
		is finite-dimensional.
		
		If $\rho^n=0$ in the associative algebra $R$, then	$\ad(\rho)^{2n-1}=0.$	Proposition~\ref{prop:zelmanov} then follows from
		this theorem and the Poincar\'e--Birkhoff--Witt theorem.
					\end{remark}

	Recall that \(A\) is a commutative \(\mathbb{F}\)-algebra generated by elements $a_1,\ldots,a_m.$ Then the Lie \(\mathbb{F}\)-algebra \(\Der A\) is a homomorphic image of a Lie subalgebra of $\Der \mathbb{F}[x_1,\ldots,x_m],$ where \(\mathbb{F}[x_1,\ldots,x_m]\) denotes the polynomial algebra in \(m\) variables over \(\mathbb{F}\). Indeed, let  $A\cong  \mathbb{F}[x_1,\ldots,x_m]/J.$ Define $$ N(J)=\{d\in \Der \mathbb{F}[x_1,\ldots,x_m] \mid d(J) \subseteq J \}.$$ Then the natural map $N(J)\to \Der A$ is a surjective homomorphism.
	
	Let \(\mathbb{F}_p\) be the prime subfield of \(\mathbb{F}\). We have
	\[
	\Der \mathbb{F}[x_1,\ldots,x_m]
	=
	\left\{
	\sum_{i=1}^{m}
	f_i(x_1,\ldots,x_m)\frac{\partial}{\partial x_i}
	\ \middle|\
	f_i\in \mathbb{F}[x_1,\ldots,x_m],\ 1\leq i\leq m
	\right\}.
	\]
	Moreover, this Lie algebra is the \(\mathbb{F}\)-linear span of the Lie \(\mathbb{F}_p\)-algebra
	\[
	\Der \mathbb{F}_p[x_1,\ldots,x_m]
	=
	\left\{
	\sum_{i=1}^{m}
	f_i(x_1,\ldots,x_m)\frac{\partial}{\partial x_i}
	\ \middle|
	f_i\in \mathbb{F}_p[x_1,\ldots,x_m],\ 1\leq i\leq m
	\right\}.
	\]

	By \cite[Proposition~4.1]{BezushchakPetravchukZelmanov}, the Lie algebra	$\Der \mathbb{F}_p[x_1,\ldots,x_m]$ satisfies a multilinear identity \(P=0\) with coefficients \(\pm 1\).
	Since
\[
\Der \mathbb{F}[x_1,\ldots,x_m]
\cong
\mathbb{F}\otimes_{\mathbb{F}_p}
\Der \mathbb{F}_p[x_1,\ldots,x_m]
\]
	and \(\Der A\) is a homomorphic image of a subalgebra of
	\(\Der \mathbb{F}[x_1,\ldots,x_m]\),	it follows that both	$\Der \mathbb{F}[x_1,\ldots,x_m]$ and	$\Der A$	satisfy the identity \(P=0\).

	Let	$\mathbb{F}\langle x_1,\ldots,x_n\rangle$	be the free associative algebra without identity, and let \(I\) be the ideal of   $ \mathbb{F}\langle x_1,\ldots,x_n\rangle$ generated by all elements of the form
\[
P(\sigma_1,\ldots,\sigma_l)
\quad\text{and}\quad
\rho(x_1,\ldots,x_n)^{p^{n(\rho)}},
\]
where \(\sigma_i\) and \(\rho\) range over all
commutators in \(x_1,\ldots,x_n\). Since \(P\) is multilinear and every iterated commutator in \(x_1,\ldots,x_n\) is homogeneous, the ideal \(I\) is homogeneous.

	By Proposition~\ref{prop:zelmanov}, the algebra	$\mathbb{F}\langle x_1,\ldots,x_n\rangle/I$	is nilpotent. Hence, there exists an integer \(N\ge 1\) such that
	\[
	\mathbb{F}\langle x_1,\ldots,x_n\rangle^N\subseteq I.
	\]
	
	\section{Proof of the main theorem}
	
	\begin{proof}[Proof of Theorem~\ref{thm:main}]
		We show that the algebra \(U\subseteq End_{\mathbb{F}} A\) generated  by $d_{1},\ldots, d_{n}$ is finite-dimensional. It suffices to prove
		that \(U\) is spanned by products	$d_{i_1}\cdots d_{i_s}$ of length $		s<N.$
		
		Consider an arbitrary product	$	d_{i_1}\cdots d_{i_N}.$	Since
		\[
		\mathbb{F}\langle x_1,\ldots,x_n\rangle^N
		\subseteq I,
		\]
		the corresponding word \(x_{i_1}\cdots x_{i_N}\)
		belongs to \(I\) and can therefore be expressed as a
		linear combination of elements involving generators of \(I\):
		\[
		\begin{aligned}
			x_{i_1}\cdots x_{i_N}
			={}&
			\sum \alpha\,
			x_{j_1}\cdots x_{j_k}\,
			\rho^{p^{n(\rho)}} x_{\mu_1}\cdots x_{\mu_s}
				+
			\sum \beta\,
			x_{\nu_1}\cdots x_{\nu_r}\,
			p(\sigma_1,\ldots,\sigma_l)\,
			x_{w_1}\cdots x_{w_t},
		\end{aligned}
		\]
		where \(\alpha,\beta\in \mathbb{F}\).	The preceding equality holds in the free  associative algebra
		\(\mathbb{F}\langle x_1,\ldots,x_n\rangle\).
		
		Since the elements
		\[
		\rho(x_1,\ldots,x_n)^{p^{n(\rho)}},
		\qquad
		P(\sigma_1,\ldots,\sigma_l)
		\]
		are homogeneous, it follows that  each
		summand on the right-hand side has degree
		\[
		k+p^{n(\rho)}\deg(\rho)+s
		=
		r+\deg P(\sigma_1,\ldots,\sigma_l)+t
		=
		N.
		\]
	After substituting	$x_i\mapsto d_i,$ $ 1\leq i\leq n,$	we get 	\[
d_{i_1}\cdots d_{i_N}= \sum	\alpha\,
	d_{j_1}\cdots d_{j_k}
	\rho(d_1,\ldots,d_n)^{p^{n(\rho)}}
	d_{\mu_1}\cdots d_{\mu_s},
	\qquad
	\alpha\in\mathbb{F}.
	\]

	By Lemma~\ref{lem:polynomial},	$f_\rho(\rho)=0.$	Since \(f_\rho(t)\) is monic and has the form
	\[
	f_\rho(t)=
	\alpha_0t+\alpha_1t^p+\cdots+
	\alpha_{n(\rho)-1}t^{p^{n(\rho)-1}}
	+t^{p^{n(\rho)}},
	\]
	it follows that
	\[
	\rho^{p^{n(\rho)}}
	\]
	is a linear combination of
	\[
	\rho,\rho^p,\rho^{p^2},\ldots,
	\rho^{p^{n(\rho)-1}}.
	\]
	Consequently, every term of the form
	\[
	d_{j_1}\cdots d_{j_k}
	\rho(d_1,\ldots,d_n)^{p^{n(\rho)}}
	d_{\mu_1}\cdots d_{\mu_s}
	\]
	can be expressed as a linear combination of terms of the form
	\[
	d_{j_1}\cdots d_{j_k}
	\rho(d_1,\ldots,d_n)^{p^t}
	d_{\mu_1}\cdots d_{\mu_s},
	\qquad
	0\leq t<n(\rho).
	\]

	Thus, every product	$d_{i_1}\cdots d_{i_N}$	is a linear combination of products of length less than  \(N\). 	Since \(I\) is homogeneous, the same reduction applies to every product of length at least \(N\). Thus,  \(U\) is spanned by all products \(d_{i_1}\cdots d_{i_s}\) with \(1\le s<N\).
		There are only finitely many such products. Hence,	$\dim_{\mathbb{F}} U<\infty.$	This completes the proof.
	\end{proof}

\end{document}